\newtheorem{theorem}{Theorem}[section]
\newtheorem{corollary}[theorem]{Corollary}
\newtheorem{definition}[theorem]{Definition}
\journal{Computational and Applied Mathematics}
\begin{document}
	
	\begin{frontmatter}
		
		\title{Quantum Trigonometric B\'ezier Curves}

		\author{\c{C}etin Di\c{s}ib\"{u}y\"{u}k}
		\ead{cetin.disibuyuk@deu.edu.tr}

		\address{Department of Mathematics, Dokuz Eyl\"ul University, Fen Fak\"ultesi,T{\i}naztepe Kamp\"us\"u, 35390  Buca, \.Izmir.}
		
		\begin{abstract}
			
			In order to construct quantum trigonometric B\'ezier curves with shape parameter, one parameter family of trigonometric Bernstein basis functions are introduced. We study the total positivity of the basis functions to analyze the shape preserving properties of the quantum trigonometric B\'ezier curves. We also showed that quantum trigonometric B\'ezier curves can be evaluated by two different recursive evaluation  algorithms. Finally, we have defined rational counterpart of quantum trigonometric B\'ezier curves and show that the rational quantum trigonometric B\'ezier curves posses nice shape preserving properties.
			
		\end{abstract}
		\begin{keyword}
			quantum trigonometric Bernstein basis functions \sep quantum trigonometric B\'ezier curves \sep total positivity \sep de Casteljau type algorithm \sep shape preserving properties
			\MSC[2010] 65D17 \sep 05A30 
		\end{keyword}
		
	\end{frontmatter}
	
	\section{Introduction}\label{intro}
	
	B\'ezier curves are defined for various types of spaces such as polynomials, trigonometric polynomials, hyperbolic polynomials and special M\"untz spaces \cite{alfeld,disibuyuk15,li}. Although B\'ezier curves have simple definition and good shape properties, they have some limitations because the shapes of B\'ezier curves are only defined by their control points \cite{hu18}. To overcome these limitations researchers generalizes B\'ezier curves by developing B\'ezier curves with shape parameters, see \cite{disibuyuk07,disibuyuk08,hu18,lewanowicz04,li} and references therein. These shape parameters allow us to change the shape of the B\'ezier curve without changing its control points.

		This paper is about generalizing the trigonometric B\'ezier curves defined in \cite{alfeld} which discuss a theory of Bernstein-B\'ezier polynomials that has close connections to trigonometric polynomials. Trigonometric B\'ezier curves over the interval $ [a,b] $ is defined as
		\begin{equation}\label{trigbezcurve}
			P(x)=\sum_{k=0}^{n}\mathbf{b}_{k}B^n_k(x),
		\end{equation}
		where the trigonometric Bernstein basis functions $ B^n_k (x)$ are
		\begin{align}\label{trigbases}
			\binom{n}{k}\left(\frac{\sin(x-a)}{\sin(b-a)}\right)^{k}\left(\frac{\sin(b-x)}{\sin(b-a)}\right)^{n-k},\quad k=0,1,\ldots ,n.
		\end{align}
		The basis functions \eqref{trigbases} are developed by using circular barycentric coordinates given in \cite{gonsorneamtu}. Trigonometric Bernstein bases are also a special case of the bases defined in \cite{disibuyuk15}.
		
		Note that trigonometric B\'ezier curves are an element of the space $ \pi_n = \operatorname{span}\left\{\sin^{k}(x)\cos^{n-k}\right\}_{k=0}^{n}. $ Although the trigonometric B\'ezier curves and classical B\'ezier curves belong to different spaces, they share several basic properties, such as de Casteljau algorithm and subdivision (see, \cite{alfeld,disibuyuk15}). The curves that share shape properties with classical B\'ezier curves are called Bernstein-B\'ezier like curves. The curves studied in \cite{alfeld,disibuyuk07,disibuyuk08,disibuyuk15,hu18,lewanowicz04,li} are all share common shape preserving properties with classical B\'ezier curves.
	%
	%
	%
	
	There are many Bernstein-B\'ezier like curves in the literature. The most astonishing ones among these generalizations are $ q $-B\'ezier curves which are introduced a few decades ago (see \cite{oruc99},\cite{oruc03}). $ q $-B\'ezier curves are also called quantum B\'ezier curves which is one of the popular research areas in CAGD.
	
	\textit{Our main motivation is to extend quantum theory to trigonometric B\'ezier curves}. This extension allow us to define B\'ezier curves with shape parameter on the space $ \pi_n. $ We proceed in the following fashion. In Section \ref{secbases}, we define quantum trigonometric Bernstein basis functions. Total positivity of quantum trigonometric Bernstein basis functions, which is important for analyzing the shape properties of curves, is discussed in subsection \ref{sectotalpositivity}. The quantum trigonometric B\'ezier curves are defined in Section \ref{subsecquanbezcurve} where we also give two different de Casteljau type evaluation algorithm for these curves.
	Finally in Section \ref{secrational}, we define rational counterpart of quantum trigonometric B\'ezier curves and show that these rational curves posses nice shape preserving properties.
	
	\section{Quantum trigonometric Bernstein bases}\label{secbases}
	
	For simplicity, we will use the following notation:
	\[
	d(x,y;q):=\frac{q+1}{2}\sin(y-x)+\frac{q-1}{2}\sin(y+x).
	\]
	Note that the ratios
	\[
	\frac{d(a,x;1)}{d(a,b;1)}
	\]
	and
	\[
	\frac{d(x,b;1)}{d(a,b;1)}
	\]
	give the circular barycentric coordinates of $ (\cos(x),\sin(x))^{T} $ relative to circular arc of length less than $ \pi $ with vertices $ a\neq b $ on the unit circle in $ \mathbb{R}^{2} $ with center at the origin, see \cite{alfeld}.
	\subsection{Quantum trigonometric Bernstein bases}\label{subsecquantrigbases}
	Here we are going to give the definition of quantum trigonometric Bernstein basis functions of degree $ n $ over arbitrary interval.
	\begin{definition}\label{defqtrigbases}
		Let $ d(a,b;q^{i})\neq 0 $ for $ i=0,1,\ldots ,n, $ then we define quantum trigonometric Bernstein basis functions of degree $ n $ over the interval $ [a,b] $ as follows:
		\begin{equation}\label{eqnqtrigbases}
			B^{n}_{k}(x;q)={n\brack k}_{q}\frac{\displaystyle \prod_{i=0}^{k-1}d(a,x;q^{i})\cdot \prod_{i=0}^{n-k-1}d(x,b;q^{i})}{\displaystyle \prod_{i=0}^{n-1}d(a,b;q^{i})},\quad  k=0,1,\ldots,n,
		\end{equation}
		where $ {n\brack k}_{q} $ is the $ q $-binomial coefficient defined by
		\[
		{n\brack k}_{q}=\frac{[n]_{q}!}{[k]_{q}![n-k]_{q}!}.
		\]
	\end{definition}
	Here
	\[
	[k]_{q}:=\left\{\begin{array}{cl}
		\frac{1-q^{k}}{1-q},&q\neq 1,\\
		k,&q=1
	\end{array}\right.
	\]
	is the $ q- $integer and 
	\[
	[k]_{q}!:=\left\{\begin{array}{cl}
		[k]_{q}[k-1]_{q}\ldots [1]_{q},&k=1,2,\ldots ,\\
		1,&k=0
	\end{array}\right.
	\]
	is the $ q $-factorial.
	
	Note that since $ d(x,y;1)=\sin(y-x), $ setting $ q=1 $ in Equation \eqref{eqnqtrigbases} gives the circular Bernstein-B\'ezier polynomials over arbitrary interval defined in \cite{alfeld}.
	
	The following figures show quantum trigonometric Bernstein basis functions over two intervals and for different values of $ q. $
	
	\begin{figure}[H]
		\centering
		\includegraphics[width=0.7\linewidth]{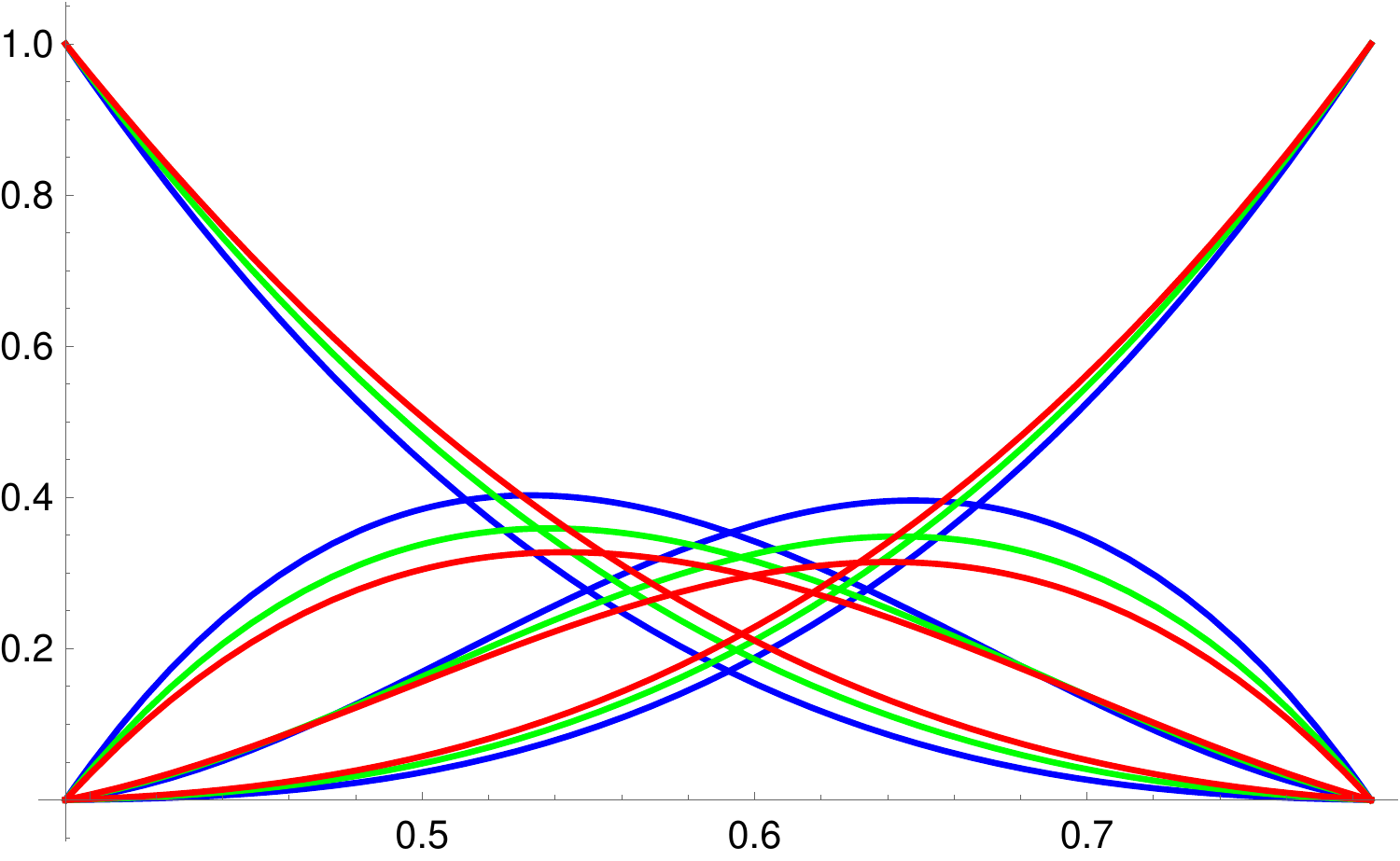}
		\caption{Cubic quantum trigonometric basis functions on $ [a,b]=[\pi/8,\pi/4] $ for $q=1.1$ (blue), $q=1.2$ (green) and $q=1.3$ (red).}
		\label{fig:basesq>1}
	\end{figure}
	
	\begin{figure}[H]
		\centering
		\includegraphics[width=0.7\linewidth]{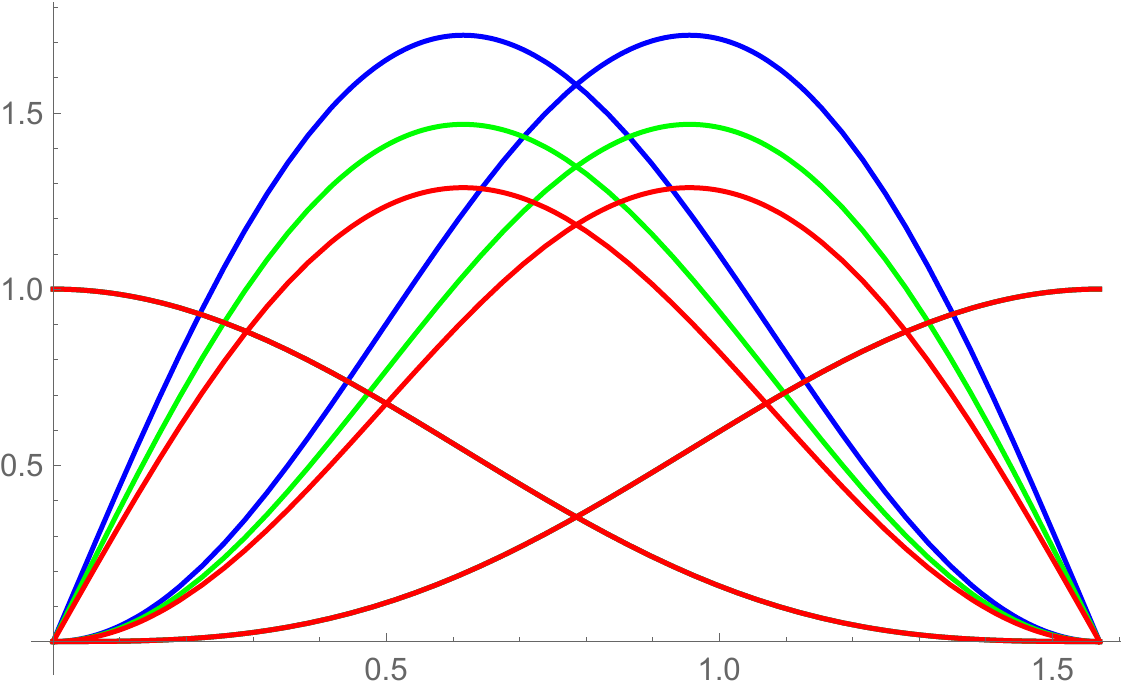}
		\caption{Cubic quantum trigonometric basis functions on $ [a,b]=[0,\pi/2] $ for $q=1.1$ (blue), $q=1.2$ (green) and $q=1.3$ (red).}
		\label{fig:0pibolu2}
	\end{figure}
	
	In Figure \ref{fig:0pibolu2} the basis function $ B^{3}_{0}(x;q)=\cos^{3}(x) $ does not depend on the parameter value $ q, $ so its graphs are overlap for different values of $ q. $ Similarly, the graphs of basis function $ B^{3}_{3}(x;q) $ are overlap since $ B^{3}_{3}(x;q)=\sin^{3}(x). $
	
	By using Pascal type identities of $ q $-binomial coefficient, it can be easily shown that quantum trigonometric Bernstein basis functions satisfy the following recurrence relations:
	\begin{align}\label{recrel1}
		B^{n}_{k}(x;q)=q^{n-k}\left(\frac{d(a,x;q^{k-1})}{d(a,b;q^{n-1})}\right)B^{n-1}_{k-1}(x;q)+\left(\frac{d(x,b;q^{n-k-1})}{d(a,b;q^{n-1})}\right)B^{n-1}_{k}(x;q)
	\end{align}
	and
	\begin{align}\label{recrel2}
		B^{n}_{k}(x;q)=\left(\frac{d(a,x;q^{k-1})}{d(a,b;q^{n-1})}\right)B^{n-1}_{k-1}(x;q)+q^{k}\left(\frac{d(x,b;q^{n-k-1})}{d(a,b;q^{n-1})}\right)B^{n-1}_{k}(x;q).
	\end{align}
	
	It is well known that totally positive bases present good shape preserving properties due to the variation diminishing properties of totally positive matrices (see \cite{carnicer}, \cite{goodman96}). The following subsection is about total positivity of quantum trigonometric Bernstein bases.
	
	\subsection{Total positivity}\label{sectotalpositivity}
	In Computer Aided Geometric Design, total positivity is a useful tool to study the shape preserving properties of curves. A sequence of real-valued functions $ \{\phi_0,\ldots,\phi_n\} $ is said to be totally positive on an interval $ I $ if the collocation matrix $ (\phi_i(x_j)), $ $ i=0,\ldots ,n, $ $ j=0,\ldots,m $ is totally positive for any points $ x_0<\ldots <x_m $ in $ I. $ A matrix is said totally positive if all its minors are nonnegative (see \cite{carnicer}).
	
	In \cite{goodman96}, total positivity of classical Bernstein B\'ezier bases is showed by using the following properties of totally positive functions:
	\begin{itemize}
		\item[P1:] If $ f $ is an increasing function from $ J $ into $ I, $ then $ \{\phi_0\circ f,\ldots,\phi_n\circ f\} $ is totally positive on $ J. $
		\item[P2:] If $ g $ is a positive function on $ I , $ then $ \{g\phi_0,\ldots,g\phi_n\} $ is totally positive on $ I. $
		\item[P3:] If $ A $ is a constant $ (m + 1) \times (n + 1) $ totally positive matrix and
		\[
		\psi_i=\sum_{j=0}^{n}A_{ij}\phi_j,\quad i=0,\ldots,m,
		\]
		then $ \{\psi_0,\ldots,\psi_n\} $ is totally positive on $ I. $
	\end{itemize}
	These properties can be easily derived from the definition. (See \cite{goodman96}). We will use these properties to show total positivity of the quantum trigonometric Bernstein basis functions.
	\begin{theorem}\label{thmtotalpositivity}
		If $ q> 0, $ then the basis $ \{B^{n}_{0}(x;q),B^{n}_{1}(x;q),\ldots ,B^{n}_{n}(x;q)\} $ is totally positive on  $ \left[\frac{k\pi}{2},\frac{(k+1)\pi}{2}\right],\ k\in \mathbb{Z}. $ 
	\end{theorem}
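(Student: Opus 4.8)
The plan is to use properties P1--P3 to transport total positivity from the classical Bernstein basis to $\{B^{n}_{k}(\cdot;q)\}$. Fix a quadrant $I=[\tfrac{k\pi}{2},\tfrac{(k+1)\pi}{2}]$ and let $[a,b]\subseteq I$, $a<b$, be the interval on which we work (since the $1\times 1$ minors of the collocation matrix are the values $B^{n}_{k}(x;q)$, total positivity forces these to be non-negative, so $[a,b]$ is the natural domain). On $I$ each of $\sin$ and $\cos$ keeps a constant sign, and expanding the definition of $d$ one gets
\[
d(a,x;q^{i})=q^{i}\cos a\,\sin x-\sin a\,\cos x,\quad d(x,b;q^{i})=q^{i}\sin b\,\cos x-\cos b\,\sin x,\quad d(a,b;q^{i})=q^{i}\cos a\,\sin b-\sin a\,\cos b .
\]
A short sign analysis, using $q>0$ (and, for the complementary range of $q$ or for a reflected quadrant, arguing symmetrically), shows that we may assume every factor $d(a,x;q^{i})$, $d(x,b;q^{i})$ for $0\le i\le n-1$, and the constant $\bigl(\prod_{i=0}^{n-1}d(a,b;q^{i})\bigr)^{-1}$ is non-negative on $[a,b]$; this compatibility between $q$ and the position of $[a,b]$ inside the quadrant is what the hypothesis is encoding.

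Next I would reduce to the case $q=1$. Set $D(x)=\sin(x-a)+\sin(b-x)$, $E(x)=D(x)/\sin(b-a)$ and $s(x)=\sin(x-a)/D(x)$ on $[a,b]$; one checks $D>0$, $E>0$, and $s'(x)=\sin(b-a)/D(x)^{2}>0$, so $s$ is an increasing bijection of $[a,b]$ onto $[0,1]$, while
\[
B^{n}_{j}(x;1)=\binom{n}{j}\left(\frac{\sin(x-a)}{\sin(b-a)}\right)^{j}\left(\frac{\sin(b-x)}{\sin(b-a)}\right)^{n-j}=E(x)^{n}\binom{n}{j}s(x)^{j}(1-s(x))^{n-j}.
\]
Hence by P1 (with the map $s$) and P2 (with the positive factor $E^{n}$) the trigonometric Bernstein basis $\{B^{n}_{0}(\cdot;1),\dots,B^{n}_{n}(\cdot;1)\}$ is totally positive on $[a,b]$ --- this being exactly the total positivity of the classical Bernstein basis, which follows in turn from P1--P2 and the total positivity of the monomials, as in \cite{goodman96}. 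Since $\{B^{n}_{k}(\cdot;q)\}_{k=0}^{n}$ and $\{B^{n}_{j}(\cdot;1)\}_{j=0}^{n}$ are two bases of the $(n+1)$-dimensional space $\pi_{n}$ restricted to $[a,b]$ (linear independence of the former is immediate from \eqref{recrel1}), there is a unique constant matrix $A=A_{n,q,a,b}$ with $B^{n}_{k}(x;q)=\sum_{j=0}^{n}A_{kj}\,B^{n}_{j}(x;1)$, and by P3 it suffices to prove that $A$ is totally positive.

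The total positivity of this change-of-basis matrix $A$ is the heart of the matter and the step I expect to be the main obstacle: it is essentially equivalent to the theorem itself, and for $q=1$ it is trivial ($A=\mathrm{Id}$), the whole difficulty being that for $q\neq1$ the linear trigonometric factors $d(a,x;q^{i})$ (and likewise $d(x,b;q^{i})$) are genuinely distinct for distinct $i$, so no common factor can be pulled out and the one-step reduction above is unavailable. I would treat $A$ by induction on $n$: the case $n=0$ is the constant $1$, and for the step the Pascal-type recurrences \eqref{recrel1}--\eqref{recrel2} express each $B^{n}_{k}(\cdot;q)$, and similarly each $B^{n}_{j}(\cdot;1)$, through the degree-$(n-1)$ bases with coefficients that are non-negative on $[a,b]$ by the sign analysis above; this realizes $A_{n,q,a,b}$ as a product of elementary non-negative bidiagonal matrices built from $A_{n-1,q,a,b}$, i.e.\ as a corner-cutting scheme, whence $A$ is totally positive by the standard fact that corner cutting preserves total positivity \cite{carnicer,goodman96}. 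An equivalent, more computational route is to imitate the proof of total positivity of the $q$-Bernstein basis in \cite{oruc99,oruc03}, now carried out with the two geometric node sequences $\{q^{-i}\tan a\}_{i}$ and $\{q^{i}\tan b\}_{i}$ in place of a single one. Throughout, the two delicate points are (i) keeping the signs of all the $d$-factors under control on each quadrant, which is where $q>0$ enters, and (ii) making the corner-cutting induction rigorous, since the recurrence multipliers depend on $x$ and so the collocation matrices are not literally products of constant matrices --- exactly the difficulty resolved in the de~Boor recurrence proof of total positivity of B-splines.
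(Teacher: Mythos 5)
Your proposal has a genuine gap, and it comes from missing the one observation that makes this theorem short. The theorem concerns the basis \emph{defined over} $[a,b]=\left[\frac{k\pi}{2},\frac{(k+1)\pi}{2}\right]$, i.e.\ the endpoints are consecutive multiples of $\pi/2$. At such an endpoint $\sin(y+x)=\pm\sin(y-x)$, so each factor $d\left(\frac{k\pi}{2},x;q^{i}\right)$ is a \emph{positive constant} (namely $q^{i}$ or $1$, according to the residue of $k$ mod $4$) times $d\left(\frac{k\pi}{2},x;1\right)$, and likewise for $d\left(x,\frac{(k+1)\pi}{2};q^{i}\right)$. Hence all the $q$-dependence of $B^{n}_{i}(x;q)$ collapses into a single positive scalar such as $q^{i^{2}-ni}{n\brack i}_{q}$, the change-of-basis matrix $A$ you introduce is a positive \emph{diagonal} matrix, and the theorem follows from the $q=1$ case (monomials, then P1 and P2 --- essentially the first half of your argument, which is correct and matches the paper) together with P3. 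That is the paper's entire proof.

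By contrast you set the problem up on an arbitrary subinterval $[a,b]\subseteq\left[\frac{k\pi}{2},\frac{(k+1)\pi}{2}\right]$, and two things then go wrong. First, your ``short sign analysis'' is false in that generality: $d(x,b;q)$ evaluated at $x=b$ equals $\frac{q-1}{2}\sin(2b)$, which is negative for $0<q<1$ and $b\in(0,\pi/2)$, so for interior endpoints some $B^{n}_{i}(x;q)$ are genuinely negative near $a$ or $b$ and total positivity simply fails; there is no hidden ``compatibility between $q$ and the position of $[a,b]$'' to appeal to, because the theorem is only about the full quadrant. Second, even granting the signs, you leave unproven the total positivity of the change-of-basis matrix $A$, which you yourself call ``essentially equivalent to the theorem itself'': the corner-cutting factorization is only gestured at, and, as you note, the multipliers in the recurrences \eqref{recrel1}--\eqref{recrel2} depend on $x$, so no factorization of $A$ into constant nonnegative bidiagonal matrices is actually exhibited. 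As written, the proposal establishes the $q=1$ case and defers all of the $q\neq 1$ content. The repair is not to push the corner-cutting machinery through, but to observe that on a full quadrant $A$ is diagonal with positive entries.
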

	\begin{proof}
		We have four cases, i.e. $ k\equiv i\ (\operatorname{mod}\ 4),$ $ i=0,1,2,3.$ We prove the theorem only for the case $ k\equiv 0\ (\operatorname{mod}\ 4). $ The cases $ k\equiv i\ (\operatorname{mod}\ 4),\ i=1,2,3 $ are omitted since their proofs are similar.
		
		Let $ k\equiv 0\ (\operatorname{mod}\ 4), $ then we have
		\[
		B^{n}_{i}(x;q)=q^{i^2-ni}{n \brack i}_{q}\left(d\left(\frac{k\pi}{2},x;1\right)\right)^{i}\left(d\left(x,\frac{(k+1)\pi}{2};1\right)\right)^{n-i}
		\]
		since
		\begin{align*}
			d\left(\frac{k\pi}{2},x;q^{i}\right)=q^{i}d\left(\frac{k\pi}{2},x;1\right)
		\end{align*}
		and
		\begin{align*}
			d\left(x,\frac{(k+1)\pi}{2};q^{i}\right)=q^{i}d\left(x,\frac{(k+1)\pi}{2};1\right).
		\end{align*}
		Clearly, the function $ d\left(\frac{k\pi}{2},x;1\right) $ is increasing and the function $ d\left(x,\frac{(k+1)\pi}{2};1\right) $ is decreasing on $ \left[\frac{k\pi}{2},\frac{(k+1)\pi}{2}\right]. $ On the other hand, both functions are positive on $ \left(\frac{k\pi}{2},\frac{(k+1)\pi}{2}\right). $ 
		
		It is given in \cite{goodman96} that the basis $ \{1,x,\ldots ,x^{n}\} $ is totally positive on $ [0,\infty). $ Then it follows by Property P1 with
		\[ f(x)=\frac{d\left(\frac{k\pi}{2},x;1\right)}{d\left(x,\frac{(k+1)\pi}{2};1\right)}
		\]
		that the basis
		\[
		\left\{ 1,\frac{d\left(\frac{k\pi}{2},x;1\right)}{d\left(x,\frac{(k+1)\pi}{2};1\right)},\ldots ,\frac{\left(d\left(\frac{k\pi}{2},x;1\right)\right)^{n}}{\left(d\left(x,\frac{(k+1)\pi}{2};1\right)\right)^{n}} \right\}
		\]
		is totally positive on $ \displaystyle \left(\frac{k\pi}{2},\frac{(k+1)\pi}{2}\right). $ Now, by Property P2 with $ g(x)=\left(d\left(x,\frac{(k+1)\pi}{2};1\right)\right)^{n}, $ we see that the basis
		\[
		\left\{ \left(d\left(x,\frac{(k+1)\pi}{2};1\right)\right)^{n},d\left(\frac{k\pi}{2},x;1\right)\left(d\left(x,\frac{(k+1)\pi}{2};1\right)\right)^{n-1},\ldots ,\left(d\left(\frac{k\pi}{2},x;1\right)\right)^{n} \right\}
		\]
		is totally positive on $ \displaystyle \left(\frac{k\pi}{2},\frac{(k+1)\pi}{2}\right). $ 
		
		Let $ A $ be a $ (n+1)\times (n+1) $ diagonal matrix with $ a_{ii}=q^{i^2-ni}{n \brack i}_{q}. $ The diagonal elements of $ A $ are positive for $ q>0, $ hence $ A $ is totally positive constant matrix.  Thus, since
		\[
		\left[\begin{array}{c}
			B^{n}_{0}(x;q)\\
			B^{n}_{1}(x;q)\\
			\vdots\\
			B^{n}_{n}(x;q)
		\end{array}\right]
		=A
		\left[\begin{array}{c}
			\left(d\left(x,\frac{(k+1)\pi}{2};1\right)\right)^{n}\\
			d\left(\frac{k\pi}{2},x;1\right)\left(d\left(x,\frac{(k+1)\pi}{2};1\right)\right)^{n-1}\\
			\vdots \\
			\left(d\left(\frac{k\pi}{2},x;1\right)\right)^{n}
		\end{array}\right],
		\]
		we see by Property P3 that the basis $ \{B^{n}_{0}(x;q),B^{n}_{1}(x;q),\ldots ,B^{n}_{n}(x;q)\} $ is totally positive on $ \displaystyle \left(\frac{k\pi}{2},\frac{(k+1)\pi}{2}\right) $ and the theorem follows by the continuity of the basis functions.
	\end{proof}

	We end this section by two important property of quantum trigonometric Bernstein bases:
	\begin{itemize}
		\item Endpoint property: 
		\begin{align}
			B^{n}_{i}(a;q)=\left\{
			\begin{array}{cc}
				1,&i=0\\
				0,&i\neq0.
			\end{array}
			\right.
		\end{align}
		and	
		\begin{align}
			B^{n}_{i}(b;q)=\left\{
			\begin{array}{cc}
				1,&i=n\\
				0,&i\neq n,
			\end{array}
			\right.
		\end{align}
		\item Non-negativity: If $ q> 0, $ then the basis $ \{B^{n}_{0}(x;q),B^{n}_{1}(x;q),\ldots ,B^{n}_{n}(x;q)\} $ is non-negative  on  $ \left[\frac{k\pi}{2},\frac{(k+1)\pi}{2}\right],\ k\in \mathbb{Z}. $
	\end{itemize}
	These properties are used in the following sections. The first property above follows easily from the definition and the latter is a consequence of total positivity of the basis.

	\section{Quantum trigonometric B\'ezier curves}\label{subsecquanbezcurve}
	Here we are going to give an explicit definition of quantum trigonometric B\'ezier curves. We also give two different recursive evaluation algorithm for quantum trigonometric B\'ezier curves.
	\begin{definition}\label{defqtrigbezcurve}
		A quantum trigonometric B\'ezier curve $ P $ over interval $ [a, b] $ is defined by
		\begin{align}\label{eqnqtrigbezcurve}
			\displaystyle P(x)=\sum_{k=0}^{n}\mathbf{b}_{k}B^{n}_{k}(x;q), \quad a\leqslant x\leqslant b.
		\end{align}	
	\end{definition}
	
	The coefficients $ \mathbf{b}_{k},\ k=0,1,\ldots n, $ are called control points of quantum trigonometric B\'ezier curve $ P. $ Note that the representation in Equation \eqref{eqnqtrigbezcurve} is an element from the space of trigonometric polynomials
	\[
	T_{n}:=\operatorname{span}\{\sin^{n-k}(x)\cos^{k}(x)\}_{k=0}^{n}.
	\]
	It is shown in \cite{lyche} that
	\[
	T_{n}=\left\{\begin{array}{ll}
		\operatorname{span}\{1,\cos(2x),\sin(2x),\ldots , \cos(nx),\sin(nx)\}, & n\text{ is even,}\\
		\operatorname{span}\{\cos(x),\sin(x),\cos(3x),\sin(3x),\ldots , \cos(nx),\sin(nx)\}, & n\text{ is odd.}
	\end{array}\right.
	\]
	
	Quantum trigonometric B\'ezier curves can be evaluated recursively as follows:
	\begin{theorem}\label{thmdecastalg}
		Let $ P $ be a quantum trigonometric B\'ezier curve over the interval $ [a,b] $ with the control points $ \mathbf{b}_{k},\ k=0,1,\ldots n. $ Set
		\[
		\tilde{b}^{0}_{k}(x)=\bar{b}^{0}_{k}(x)=\mathbf{b}_{k}
		\]
		$ k=0,1,\ldots,n $ and define recursively\\
		\textbf{Algorithm 1:}
		\[
		\tilde{b}^{r+1}_{k}(x)=q^k \frac{d(x,b;q^{n-r-k-1})}{d(a,b;q^{n-r-1})}\tilde{b}^{r}_{k}(x)+\frac{ d(a,x;q^{k})}{d(a,b;q^{n-r-1})}\tilde{b}^{r}_{k+1}(x)
		\]
		for $ r=0,1,\ldots,n-1, $ $ k=0,1,\ldots,n-r-1. $ Then
		\begin{align}\label{explicit1}
			\tilde{b}^{r}_{k}(x)=\sum_{j=0}^{r}q^{k (r-j)}\tilde{b}_{k+j}{r\brack j}\frac{\displaystyle \prod _{i=0}^{j-1} d(a,x;q^{i+k})\cdot \prod _{i=0}^{r-j-1} d(x,b;q^{i+n-r-k})}{\displaystyle \prod _{i=0}^{r-1} d(a,b;q^{i+n-r})}.	
		\end{align}
		\textbf{Algorithm 2:}
		\[
		\bar{b}^{r+1}_{k}(x)=\frac{ d(x,b;q^{n-r-k-1})}{d(a,b;q^{n-r-1})}\bar{b}^{r}_{k}(x)+q^{n-r-k-1}\frac{  d(a,x;q^{k})}{d(a,b;q^{n-r-1})}\bar{b}^{r}_{k+1}(x)
		\]
		for $ r=0,1,\ldots,n-1, $ $ k=0,1,\ldots,n-r-1. $ Then
		\begin{align}\label{explicit2}
			\bar{b}^{r}_{k}(x)=\sum_{j=0}^{r}q^{j(n-r-k)}\bar{b}_{k+j}{r\brack j}\frac{\displaystyle \prod _{i=0}^{j-1} d(a,x;q^{i+k})\cdot \prod _{i=0}^{r-j-1} d(x,b;q^{i+n-r-k})}{\displaystyle \prod _{i=0}^{r-1} d(a,b;q^{i+n-r})}.	
		\end{align}
		In particular $ \tilde{b}^{n}_{0}(x)=\bar{b}^{n}_{0}(x)=P(x). $
	\end{theorem}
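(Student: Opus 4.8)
The plan is to prove the closed forms \eqref{explicit1} and \eqref{explicit2} by induction on $r$, and then obtain $P(x)$ by specializing each to $r=n$, $k=0$. For the base case $r=0$, every right-hand side collapses to the single term $j=0$: all the products are empty, ${0\brack 0}=1$ and $q^{0}=1$, so the formulas reduce to $\mathbf{b}_{k}=\tilde b^{0}_{k}(x)=\bar b^{0}_{k}(x)$, which is the prescribed initialization.

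For the inductive step I would substitute \eqref{explicit1} for $\tilde b^{r}_{k}(x)$ and $\tilde b^{r}_{k+1}(x)$ into the recursion of Algorithm~1. The denominators behave correctly because
\[
d(a,b;q^{n-r-1})\prod_{i=0}^{r-1}d(a,b;q^{i+n-r})=\prod_{i=0}^{r}d(a,b;q^{i+n-r-1}),
\]
which is exactly the denominator of \eqref{explicit1} at level $r+1$. For the numerators one uses the two elementary absorption identities
\[
d(x,b;q^{s-1})\prod_{i=0}^{m-1}d(x,b;q^{s+i})=\prod_{i=0}^{m}d(x,b;q^{s-1+i}),\qquad d(a,x;q^{k})\prod_{i=0}^{m-1}d(a,x;q^{k+1+i})=\prod_{i=0}^{m}d(a,x;q^{k+i}),
\]
so that the prefactor $q^{k}\,d(x,b;q^{n-r-k-1})$ simply lengthens the $d(x,b;\cdot)$-product in the first sum, and the prefactor $d(a,x;q^{k})$ lengthens the $d(a,x;\cdot)$-product in the second sum. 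After shifting the summation index of the second sum by one, both sums carry the common product $\prod_{i=0}^{j-1}d(a,x;q^{i+k})\cdot\prod_{i=0}^{r-j}d(x,b;q^{i+n-r-k-1})$, with the coefficient of $\mathbf{b}_{k+j}$ equal to $q^{k(r+1-j)}{r\brack j}$ coming from the first sum and $q^{(k+1)(r+1-j)}{r\brack j-1}$ from the second. Factoring out $q^{k(r+1-j)}$ and applying the $q$-Pascal identity ${r+1\brack j}={r\brack j}+q^{\,r+1-j}{r\brack j-1}$ yields precisely the coefficient $q^{k(r+1-j)}{r+1\brack j}$ demanded by \eqref{explicit1} at level $r+1$. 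The argument for Algorithm~2 is structurally identical; there the powers of $q$ are apportioned so that one instead invokes the companion identity ${r+1\brack j}=q^{j}{r\brack j}+{r\brack j-1}$.

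It remains to evaluate at $r=n$ and $k=0$. Then $q^{k(r-j)}=q^{j(n-r-k)}=1$, and the triple product in \eqref{explicit1}--\eqref{explicit2} becomes
\[
\frac{\prod_{i=0}^{j-1}d(a,x;q^{i})\cdot\prod_{i=0}^{n-j-1}d(x,b;q^{i})}{\prod_{i=0}^{n-1}d(a,b;q^{i})},
\]
which, multiplied by ${n\brack j}$, is exactly $B^{n}_{j}(x;q)$ as in \eqref{eqnqtrigbases}. Hence $\tilde b^{n}_{0}(x)=\bar b^{n}_{0}(x)=\sum_{j=0}^{n}\mathbf{b}_{j}B^{n}_{j}(x;q)=P(x)$. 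The substantive content is entirely the two absorption identities for the $d$-products together with the two $q$-Pascal recurrences (the same ones used to obtain \eqref{recrel1}--\eqref{recrel2}); the only delicate point is bookkeeping the index shifts in the products and the exponents of $q$ through the induction, which I expect to be the main chore.
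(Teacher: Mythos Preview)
Your proposal is correct and follows essentially the same route as the paper: induction on $r$, substitution of the closed forms into the recursion, absorption of the extra $d$-factor into the appropriate product, an index shift in the second sum, and the $q$-Pascal identity ${r+1\brack j}={r\brack j}+q^{r+1-j}{r\brack j-1}$ for Algorithm~1 (the paper omits the analogous computation for Algorithm~2, where your observation that the companion identity ${r+1\brack j}=q^{j}{r\brack j}+{r\brack j-1}$ is the relevant one is correct). Your explicit verification that $\tilde b^{n}_{0}(x)=\bar b^{n}_{0}(x)=P(x)$ is a detail the paper leaves implicit.
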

	\begin{proof} We will only prove equation \eqref{explicit1}, since the proof of equation \eqref{explicit2} is similar.
		
		The proof is by induction on $ r. $ Clearly, \eqref{explicit1} is true for $ r=0,\ k=0,\ldots,n-1. $ Assume that \eqref{explicit1} is true for any $ r<n,\ k=0,\ldots,n-r-1, $ then
		\begin{align*}
			\tilde{b}^{r+1}_{k}(x)=&q^k \frac{d(x,b;q^{n-r-k-1})}{d(a,b;q^{n-r-1})}\tilde{b}^{r}_{k}(x)+\frac{ d(a,x;q^{k})}{d(a,b;q^{n-r-1})}\tilde{b}^{r}_{k+1}(x)\\
			=&q^k \frac{d(x,b;q^{n-r-k-1})}{d(a,b;q^{n-r-1})}\sum_{j=0}^{r}q^{k (r-j)}\tilde{b}_{k+j}{r\brack j}\frac{\displaystyle \prod _{i=0}^{j-1} d(a,x;q^{i+k})\cdot \prod _{i=0}^{r-j-1} d(x,b;q^{i+n-r-k})}{\displaystyle \prod _{i=0}^{r-1} d(a,b;q^{i+n-r})}\\
			&+\frac{ d(a,x;q^{k})}{d(a,b;q^{n-r-1})}\sum_{j=0}^{r}q^{(k+1) (r-j)}\tilde{b}_{k+1+j}{r\brack j}\frac{\displaystyle \prod _{i=0}^{j-1} d(a,x;q^{i+k+1})\cdot \prod _{i=0}^{r-j-1} d(x,b;q^{i+n-(r+1)-k})}{\displaystyle \prod _{i=0}^{r-1} d(a,b;q^{i+n-r})}.
		\end{align*}
		Rearranging gives
		\begin{align*}
			\tilde{b}^{r+1}_{k}(x)=\sum_{j=0}^{r}q^{k ((r+1)-j)}\tilde{b}_{k+j}&{r\brack j}\frac{\displaystyle \prod _{i=0}^{j-1} d(a,x;q^{i+k})\cdot \prod _{i=-1}^{r-j-1} d(x,b;q^{i+n-r-k})}{\displaystyle \prod _{i=-1}^{r-1} d(a,b;q^{i+n-r})}\\
			&+\sum_{j=0}^{r}q^{k (r-j)}\tilde{b}_{k+1+j}q^{r-j}{r\brack j}\frac{\displaystyle \prod _{i=-1}^{j-1} d(a,x;q^{i+k+1})\cdot \prod _{i=0}^{r-j-1} d(x,b;q^{i+n-(r+1)-k})}{\displaystyle \prod _{i=-1}^{r-1} d(a,b;q^{i+n-r})}.
		\end{align*}
		Shifting the index of the products, we get
		\begin{align*}
			\tilde{b}^{r+1}_{k}(x)=\sum_{j=0}^{r}q^{k ((r+1)-j)}\tilde{b}_{k+j}&{r\brack j}\frac{\displaystyle \prod _{i=0}^{j-1} d(a,x;q^{i+k})\cdot \prod _{i=0}^{(r+1)-j-1} d(x,b;q^{i+n-(r+1)-k})}{\displaystyle \prod _{i=0}^{(r+1)-1} d(a,b;q^{i+n-(r+1)})}\\
			&+\sum_{j=0}^{r}q^{k (r-j)}\tilde{b}_{k+1+j}q^{r-j}{r\brack j}\frac{\displaystyle \prod _{i=}^{j} d(a,x;q^{i+k})\cdot \prod _{i=0}^{r-j-1} d(x,b;q^{i+n-(r+1)-k})}{\displaystyle \prod _{i=0}^{(r+1)-1} d(a,b;q^{i+n-(r+1)})}.
		\end{align*}
		Now, shifting the index of the second summation and rearranging gives
		\begin{align*}
			\tilde{b}^{r+1}_{k}(x)=q^{k (r+1)}\tilde{b}_{k}&\frac{\displaystyle \prod _{i=0}^{(r+1)-1} d(x,b;q^{i+n-(r+1)-k})}{\displaystyle \prod _{i=0}^{(r+1)-1} d(a,b;q^{i+n-(r+1)})}\\
			&+\sum_{j=1}^{r}q^{k ((r+1)-j)}\tilde{b}_{k+j}\left\{{r\brack j}+q^{(r+1)-j}{r\brack j-1}\right\} \frac{\displaystyle \prod _{i=0}^{j-1} d(a,x;q^{i+k})\cdot \prod _{i=0}^{(r+1)-j-1} d(x,b;q^{i+n-(r+1)-k})}{\displaystyle \prod _{i=0}^{(r+1)-1} d(a,b;q^{i+n-(r+1)})}\\
			&+\tilde{b}_{k+r+1}\frac{\displaystyle \prod _{i=0}^{(r+1)-1} d(a,x;q^{i+k})}{\displaystyle \prod _{i=0}^{(r+1)-1} d(a,b;q^{i+n-(r+1)})}.
		\end{align*}
		Hence, equation \eqref{explicit1} follows from Pascal type identity $ \displaystyle {r\brack j}+q^{r+1-j}{r\brack j-1}={r+1\brack j}. $ 
	\end{proof}
	Using the endpoint property of the basis functions, one may easily deduce that	$ P(a)=\mathbf{b}_{0} $ and $ P(b)=\mathbf{b}_{n}. $ This property of B\'ezier curves is called the end point interpolation property. Another nice shape preserving property of quantum trigonometric B\'ezier curves follows from total positivity of the basis functions:
	
	As in \cite{goodman96}, for any real sequence $ v, $ finite or infinite, we use the notation $ S^{-}(v) $ for the number of strict sign changes in $ v. $ Also, the strict sign changes of a real valued function $ f $ on an interval $ I, $ denoted by $ S^{-}(f) $ is defined as
	\[
	S^{-}(f)=\sup S^{-}(f(x_{0}),\ldots, f(x_{m})),
	\]
	where the supremum is taken over all increasing sequences  $ (x_{0},\ldots, x_{m}). $
	
	As a consequence of Theorem \ref{thmtotalpositivity}, we have the following corollary.
	\begin{corollary}\label{corollarysignchanges}
		Let $ q>0 $ and $ P $ be a quantum trigonometric B\'ezier curve over the interval $ \left[\frac{k\pi}{2},\frac{(k+1)\pi}{2}\right],\ k\in \mathbb{Z}, $  with the control points $ \mathbf{b}_{k},\ k=0,1,\ldots n. $ Then,
		\[
		S^{-}(P)\leqslant S^{-}(\mathbf{b}_{0},\ldots ,\mathbf{b}_{n}).
		\]
	\end{corollary}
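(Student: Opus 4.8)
The plan is to obtain the inequality as a direct consequence of the total positivity established in Theorem~\ref{thmtotalpositivity}, combined with the classical variation-diminishing property of totally positive matrices.

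First I would fix an arbitrary increasing sequence $x_0<x_1<\cdots<x_m$ in $\left[\frac{k\pi}{2},\frac{(k+1)\pi}{2}\right]$ and consider the collocation matrix $M=\bigl(B^{n}_{i}(x_j;q)\bigr)$, $i=0,\ldots,n$, $j=0,\ldots,m$. Since $q>0$, Theorem~\ref{thmtotalpositivity} guarantees that $M$ is totally positive, and hence so is $M^{T}$ (its minors are minors of $M$). Reading off Definition~\ref{defqtrigbezcurve}, the sampled values of the curve are obtained from the control points by
\[
\bigl(P(x_0),\ldots,P(x_m)\bigr)^{T}=M^{T}\,\bigl(\mathbf{b}_0,\ldots,\mathbf{b}_n\bigr)^{T}.
\]

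Next I would invoke the variation-diminishing property of totally positive matrices (see \cite{carnicer},\cite{goodman96}): for any real vector $v$ one has $S^{-}(M^{T}v)\le S^{-}(v)$. Taking $v=(\mathbf{b}_0,\ldots,\mathbf{b}_n)^{T}$ gives
\[
S^{-}\bigl(P(x_0),\ldots,P(x_m)\bigr)\le S^{-}(\mathbf{b}_0,\ldots,\mathbf{b}_n).
\]
Because the right-hand side is independent of the chosen sample points, I would finish by taking the supremum over all increasing sequences $(x_0,\ldots,x_m)$ in the interval; by the definition of $S^{-}(P)$ recalled above, this yields $S^{-}(P)\le S^{-}(\mathbf{b}_0,\ldots,\mathbf{b}_n)$, which is the claim.

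The computation here is essentially immediate; the only substantive input is the variation-diminishing property of totally positive matrices, and that is precisely the classical fact which makes total positivity the relevant notion for shape preservation, so it is invoked rather than proved. In other words, the real work of the corollary has already been carried out in Theorem~\ref{thmtotalpositivity}, and for this particular inequality no further structure of the basis (in particular, no partition-of-unity normalization) is needed.
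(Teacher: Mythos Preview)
Your argument is correct and is exactly the standard unpacking of what the paper leaves implicit: the corollary is stated there simply ``as a consequence of Theorem~\ref{thmtotalpositivity},'' and your collocation-matrix plus variation-diminishing step is the routine justification behind that remark. There is nothing to add.
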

	
	It is seen from Corollary \ref{corollarysignchanges} that the number of sign changes of a quantum trigonometric B\'ezier curve defined over the interval $ \left[\frac{k\pi}{2},\frac{(k+1)\pi}{2}\right],\ k\in \mathbb{Z}, $ is restricted with the number of the sign changes of its control polygon. We have also seen that these curves satisfy the end point interpolation property. So one may say that, in some sense, the quantum trigonometric B\'ezier curves defined over the interval $ \left[\frac{k\pi}{2},\frac{(k+1)\pi}{2}\right],\ k\in \mathbb{Z} $ mimics the shape of its control polygon. However these shape preserving properties do not include the convex hull and affine invariance properties. This disadvantage of quantum trigonometric B\'ezier curves is due to the quantum Bernstein basis functions' lack of the partition of unity property. For better shape preserving properties, one may consider the rational counterpart of the quantum trigonometric B\'ezier curves which we are going to discuss in Section \ref{secrational}.	We end this section with the following discussion on a well known property of B\'ezier curves:
		
		Let $P(x)$ be a B\'ezier curve defined over the interval $[a,b].$ For any $c\in [a,b],$ let $P_{l}(x)$ and $P_{r}(x)$ be the curve segments of $P$ that corresponds to the intervals $[a,c]$ and $[c,d]$ respectively. These two curve segments can be expressed as a B\'ezier curve of degree $n$ defined over the interval $[a,b].$ This porperty of B\'ezier curves is called subdivision property (see \cite{farin}). In the classical case, i.e. standard polynomial B\'ezier curves, the control points of the curve segments $P_{l}$ and $P_{r}$ can be easily obtained from the classical de Casteljau algorithm. However, for the $q$-B\'ezier curves, the control points of each curve segment are obtained from two different de Casteljau type algortihm (see \cite{disibuyuk22}, \cite{simeonov12}). For the quantum trigonometric B\'ezier curves, both \textbf{\textit{Algorithm 1}} and \textbf{\textit{Algorithm 2}} do not lead us to the control points of the curve segments. Considering the importance of the subdivision property, our future work are planned to find the control points of the curve segments by applying techniques similar to that presented in \cite{disibuyuk15}, \cite{disibuyuk22} and \cite{simeonov12}.

	\section{Rational quantum trigonometric B\'ezier curves}\label{secrational}
	We begin with the definition of rational quantum trigonometric Bernstein bases.
	\begin{definition}\label{defrationalbases}
		For a given $ w_{k}\in\mathbb{R},\ k=0,1,\ldots ,n, $ the rational quantum trigonometric Bernstein bases $ R^{n}_{k}(x) $ of degree $ n $ over the interval $ [a,b] $ are defined as follows
		\begin{align}\label{eqnrationalbases}
			R^{n}_{k}(x;q)=\frac{w_{k}B^{n}_{k}(x;q)}{\sum_{i=0}^{n}w_{i}B^{n}_{i}(x;q)}.
		\end{align}
	\end{definition}
	Here the numbers $ w_{k} $ are called weights. For non-singularity, the weights must be chosen so that
	\[
	\sum_{i=0}^{n}w_{i}B^{n}_{i}(x;q)\neq 0, \forall x\in[a,b].
	\]
	The following figure depicts the third degree rational quantum trigonometric Bernstein bases defined over the interval $ \left[0,\frac{\pi}{2}\right]. $
	\begin{figure}[H]
		\centering
		\includegraphics[width=0.7\linewidth]{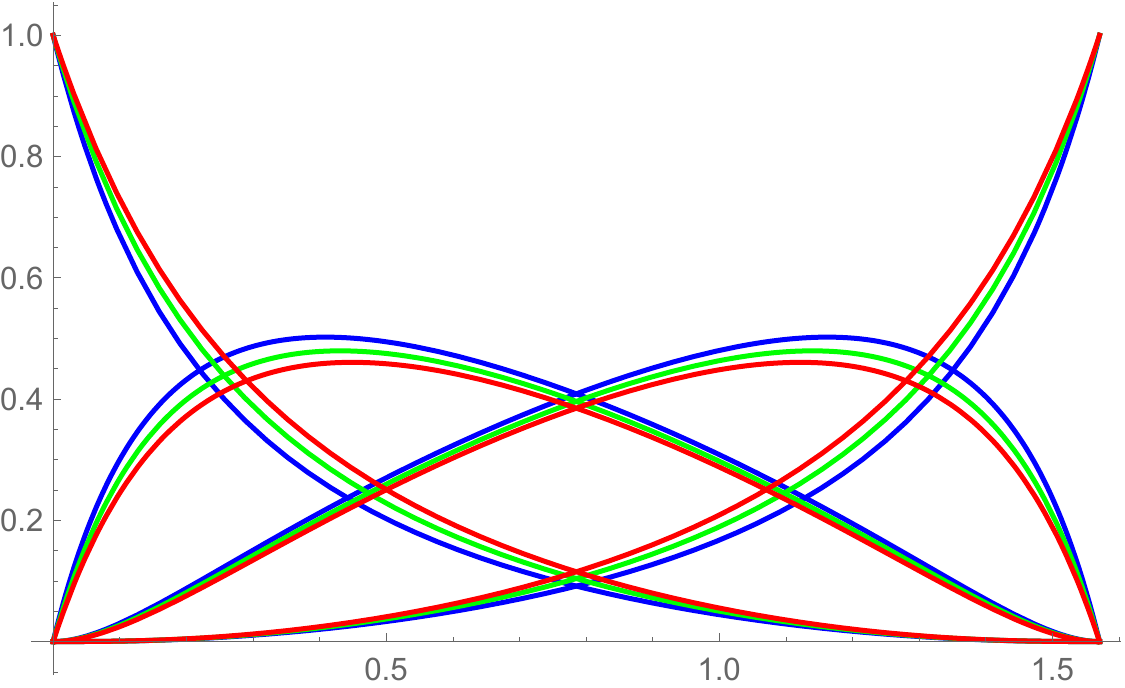}
		\caption{Third degree rational quantum trigonometric Bernstein bases, with $q=1.1$ (blue), $q=1.2$ (green) and $q=1.3$ (red), defined over the interval $ \left[0,\frac{\pi}{2}\right] $ where all weights are equal to 1.}
		\label{fig:0pibolu2normalized}
	\end{figure}
	\begin{definition}\label{defrationalcurve}
		A rational quantum trigonometric B\'ezier curve $ R $ over interval $ [a, b] $ is defined by
		\begin{align}\label{eqnrationalcurve}
			\displaystyle R(x)=\sum_{k=0}^{n}\mathbf{b}_{k}R^{n}_{k}(x;q)=\sum_{k=0}^{n}\mathbf{b}_{k}\frac{w_{k}B^{n}_{k}(x;q)}{\sum_{i=0}^{n}w_{i}B^{n}_{i}(x;q)}, \quad a\leqslant x\leqslant b.
		\end{align}	
	\end{definition}
	The following theorem will be useful for the shape preserving properties of rational quantum trigonometric B\'ezier curves.
	\begin{theorem}\label{thmtotalpositivityofrationalbases}
		If $ q> 0 $ and $ w_{k}> 0,\ k=0,1,\ldots n, $ then the basis $ \{R^{n}_{0}(x;q),R^{n}_{1}(x;q),\ldots ,R^{n}_{n}(x;q)\} $ is normalized totally positive on  $ \left[\frac{k\pi}{2},\frac{(k+1)\pi}{2}\right],\ k\in \mathbb{Z}. $
	\end{theorem}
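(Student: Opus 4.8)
The plan is to deduce the theorem from Theorem \ref{thmtotalpositivity} by two applications of the stability properties P2 and P3, together with the elementary observation that the rational system is a partition of unity. Write $I=\left[\frac{k\pi}{2},\frac{(k+1)\pi}{2}\right]$ and set $D(x):=\sum_{i=0}^{n}w_{i}B^{n}_{i}(x;q)$, so that $R^{n}_{j}(x;q)=w_{j}B^{n}_{j}(x;q)/D(x)$.

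The first step is to show that $D(x)>0$ for all $x\in I$; this is what makes the rational basis well defined on $I$ and at the same time yields normalization, since then $\sum_{j=0}^{n}R^{n}_{j}(x;q)=D(x)/D(x)=1$ on $I$. For positivity of $D$ I would use the non-negativity of $\{B^{n}_{0}(\cdot;q),\ldots,B^{n}_{n}(\cdot;q)\}$ recorded at the end of Section \ref{secbases} together with $w_{i}>0$: on the open interval the factors $d\!\left(\frac{k\pi}{2},x;1\right)$ and $d\!\left(x,\frac{(k+1)\pi}{2};1\right)$ used in the proof of Theorem \ref{thmtotalpositivity} are strictly positive, hence each $B^{n}_{j}(x;q)>0$ there (in particular $D(x)>0$), while at the two endpoints the endpoint property gives $B^{n}_{0}\left(\frac{k\pi}{2};q\right)=1$ and $B^{n}_{n}\left(\frac{(k+1)\pi}{2};q\right)=1$, so $D$ is at least $w_{0}>0$, respectively $w_{n}>0$, there.

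For the total positivity I would then argue as in the proof of Theorem \ref{thmtotalpositivity}. By that theorem $\{B^{n}_{0}(x;q),\ldots,B^{n}_{n}(x;q)\}$ is totally positive on $I$. Applying Property P3 with the constant $(n+1)\times(n+1)$ diagonal matrix $A=\operatorname{diag}(w_{0},\ldots,w_{n})$, which is totally positive because every $w_{i}>0$, shows that $\{w_{0}B^{n}_{0}(x;q),\ldots,w_{n}B^{n}_{n}(x;q)\}$ is totally positive on $I$. Next, applying Property P2 with the positive function $g(x)=1/D(x)$ shows that $\{R^{n}_{0}(x;q),\ldots,R^{n}_{n}(x;q)\}$ is totally positive on $I$. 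Together with the normalization $\sum_{j}R^{n}_{j}(x;q)\equiv 1$ from the first step, this is precisely the assertion that the rational basis is normalized totally positive on $I$.

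The only delicate point is the positivity of the denominator $D$ on the \emph{closed} interval --- most notably at the two endpoints, where all but one of the basis functions vanish --- because Property P2 requires a function that is positive on the whole of $I$; this is exactly what the endpoint property supplies. Apart from that, the argument is a routine bookkeeping chain through P2 and P3, structurally identical to the proof of Theorem \ref{thmtotalpositivity}.
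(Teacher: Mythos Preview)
Your proof is correct and follows essentially the same route as the paper: total positivity of $\{B^{n}_{j}\}$ from Theorem~\ref{thmtotalpositivity}, then the two stability properties P2 and P3, together with $\sum_{j}R^{n}_{j}\equiv 1$ for normalization. The only cosmetic difference is that the paper applies P2 first (dividing by $D$) and P3 second (multiplying by $\operatorname{diag}(w_{0},\ldots,w_{n})$), whereas you apply them in the opposite order; your more careful justification of $D(x)>0$ at the endpoints via the endpoint property is a detail the paper leaves implicit.
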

	\begin{proof}
		We may write
		\[
		\left[\begin{array}{c}
			R^{n}_{0}(x;q)\\
			R^{n}_{1}(x;q)\\
			\vdots\\
			R^{n}_{n}(x;q)
		\end{array}\right]=W\left[\begin{array}{c}
			\tilde{B}^{n}_{0}(x;q)\\
			\tilde{B}^{n}_{1}(x;q)\\
			\vdots\\
			\tilde{B}^{n}_{n}(x;q)
		\end{array}\right]
		\]
		where
		\[
		\{\tilde{B}^{n}_{0}(x;q),\ldots ,\tilde{B}^{n}_{n}(x;q)\}=\left\{\frac{B^{n}_{0}(x;q)}{\sum_{i=0}^{n}w_{i}B^{n}_{i}(x;q)},\ldots ,\frac{B^{n}_{n}(x;q)}{\sum_{i=0}^{n}w_{i}B^{n}_{i}(x;q)}\right\}
		\]
		and
		\[
		W=\left[
		\begin{array}{ccc}
			w_{0} & & \\
			& \ddots & \\
			& & w_{n}
		\end{array}
		\right]
		\]
		is an $ (n+1)\times (n+1)$ diagonal matrix.
		
		If $ w_{k}> 0,\ k=0,\ldots , n, $ then the function $ \displaystyle \frac{1}{\sum_{i=0}^{n}w_{i}B^{n}_{i}(x;q)} $ is positive. Hence the basis $ \{\tilde{B}^{n}_{0}(x;q),\ldots ,\tilde{B}^{n}_{n}(x;q)\} $ is totally positive on  $ \left[\frac{k\pi}{2},\frac{(k+1)\pi}{2}\right],\ k\in \mathbb{Z} $ by Property P2 given in Section \ref{sectotalpositivity}. Thus totally positivity of the basis $ \{R^{n}_{0}(x;q),\ldots ,R^{n}_{n}(x;q) \} $ follows from Property P3. Moreover, the basis $ \{R^{n}_{0}(x;q),\ldots ,R^{n}_{n}(x;q) $ is normalized totally positive on  $ \left[\frac{k\pi}{2},\frac{(k+1)\pi}{2}\right],\ k\in \mathbb{Z}, $ since $ \sum_{k=0}^{n}R^{n}_{k}(x;q)=1. $
	\end{proof}
	\subsection{Shape preserving properties of rational quantum trigonometric B\'ezier curves}\label{subsecshapepreservingpropsofrationalcurves}
	It follows from Theorem \ref{thmtotalpositivityofrationalbases} that rational quantum trigonometric B\'ezier curves posses nice shape preserving properties. This properties are as follows:
	
	The  rational quantum trigonometric B\'ezier curve $ R(x)=\sum_{k=0}^{n}\mathbf{b}_{k}R^{n}_{k}(x;q) $ over the interval $ [a,b]=\left[\frac{k\pi}{2},\frac{(k+1)\pi}{2}\right],\ k\in \mathbb{Z} $ satisfies
	\begin{itemize}
		\item \textbf{End point interpolation property:} 
		We have $ R(a)=\mathbf{b}_{0} $ and $ R(b)=\mathbf{b}_{n}. $
		\item \textbf{Convex hull property:} $ R $ lies in the convex hull of the control points $ \mathbf{b}_{k},\ k=0,1\ldots ,n $ when $ q>0 $ and all $ w_{k}>0. $
		\item \textbf{Variation diminishing property:} The curve has no more intersections with any line than its control polygon has.
		\item \textbf{Affine invariance property:} The curve $ R $ is an affine combination of its control points, hence it is invariant under affine transformations.
	\end{itemize}
	
	The following figure shows a control polygon and associated rational quantum trigonometric B\'ezier curves with different values of $ q. $
	\begin{figure}[H]
		\centering
		\includegraphics[width=0.4\linewidth]{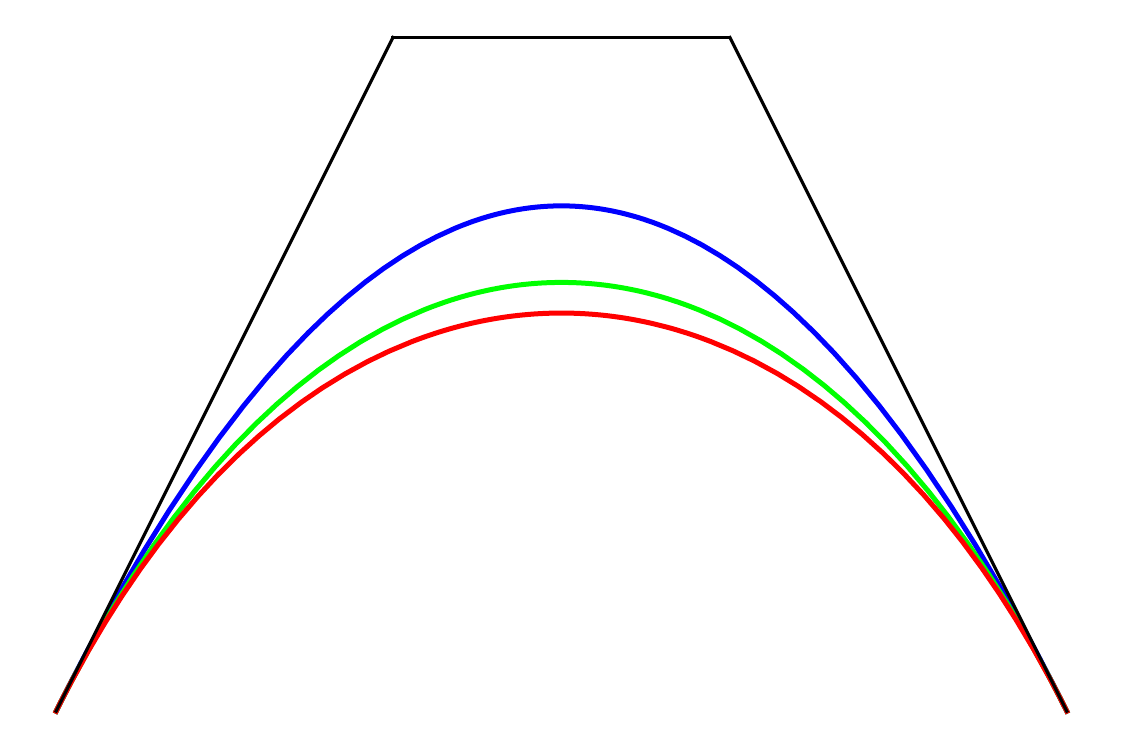}
		\caption{Third degree rational quantum trigonometric B\'ezier curves associated with the control polygon. Here $ \mathbf{b}_{0}=(0,0), $ $ \mathbf{b}_{1}=(1,2), $ $ \mathbf{b}_{2}=(2,2), $ $ \mathbf{b}_{3}=(3,0), $ all $ w_{k}=1 $ and $q=1$ (blue), $q=2$ (green) and $q=3$ (red). The rational quantum trigonometric B\'ezier curves are evaluated on the interval $ \left[0,\frac{\pi}{2}\right]. $}
		\label{fig:rationalcurve}
	\end{figure}
	Figure \ref{fig:rationalcurve} shows that as the parameter value $ q $ increases the curve approach the line segment between the points $ \mathbf{b}_{0} $ and $ \mathbf{b}_{3}. $


\begin{thebibliography}{99}
		
		\bibitem{alfeld}
		Alfeld P, Neamtu M, Schumaker LL. Circular {B}ernstein-{B}\'ezier
		polynomials, in: Mathematical methods for curves and surfaces ({U}lvik,
		1994), Vanderbilt Univ. Press, Nashville, TN, 1995, pp. 11--20.
		
		\bibitem{gonsorneamtu}
		Alfeld P, Neamtu M. Non-polynomial polar forms, in: Curves and surfaces in
		geometric design ({C}hamonix-{M}ont-{B}lanc, 1993), A K Peters, Wellesley,
		MA, 1994, pp. 193--200.
		
		\bibitem{carnicer}
		Carnicer J.M., Pe{\~ n}a J.M. Totally positive bases for shape preserving curve design and optimality of {B}-splines, Computer Aided Geometric Design 11 (1994), pp. 633--654.
		
		\bibitem{disibuyuk07}
		Di\c{s}ib\"{u}y\"{u}k {\c{C}}, Oru\c{c} H, A generalization of rational Bernstein?B\'ezier curves, BIT Numerical Mathematics, 47 (2007), pp. 313--323.
		
		\bibitem{disibuyuk08}
		Di\c{s}ib\"{u}y\"{u}k {\c{C}}, Oru\c{c} H, Tensor product $ q $-{B}ernstein polynomials, BIT Numerical Mathematics, 48 (2008), pp. 689--700.
		
		\bibitem{disibuyuk15}
		Di\c{s}ib\"{u}y\"{u}k {\c{C}}, Goldman R.
		\href{http://dx.doi.org/10.1016/j.jat.2014.12.007}{A unifying structure for
			polar forms and for {B}ernstein {B}\'ezier curves}, J. Approx. Theory 192
		(2015), pp. 234--249.
		\newblock \href {http://dx.doi.org/10.1016/j.jat.2014.12.007}
		{\path{doi:10.1016/j.jat.2014.12.007}}.
		
		\bibitem{disibuyuk22}
		Di\c{s}ib\"{u}y\"{u}k {\c{C}}.
		\href{https://mtjpamjournal.com/papers/article_id_mtjpam-d-22-00011/}{Homogeneous $\bar{q}$-blossoming and {B}\'ezier curves}, Montes Taurus Journal of Pure and Applied Mathematics 4(2)
		(2022), pp. 86--102.
		
		\bibitem{farin}	
		Farin G., Curves and Surfaces for CAGD, a Practical Guide, 5th edn., Academic Press, San Diego USA, 2002.
		
		\bibitem{goodman96}
		Goodman TNT, Total positivity and the shape of curves, in: Total
		positivity and its applications ({J}aca, 1994), Vol. 359 of Math. Appl.,
		Kluwer Acad. Publ., Dordrecht, 1996, pp. 157--186.
		
		\bibitem{hu18}
		Hu G., Wu J., Qin X., A novel extension of the B\'ezier model and its applications to surface
		modeling, Advances in Engineering Software, 125 (2018), pp. 27--54.
		
		\bibitem{lewanowicz04}
		Lewanowicz S., Wo\'zny P., Generalized Bernstein Polynomials, BIT Numerical Mathematics, 44 (2004), pp. 63--78.
		
		\bibitem{li}
		Li Y.J., Wang G.Z., Two kinds of {B}-basis kinds of B-basis of the algebraic hyperbolic space, J. Zhejiang Univ. Sci., 6A(7) (2005), pp. 750--759.
		
		\bibitem{lyche}
		Lyche T, Winther R. \href{http://dx.doi.org/10.1016/0021-9045(79)90017-0}{A
			stable recurrence relation for trigonometric {$B$}-splines}, J. Approx.
		Theory 25~(3) (1979), pp. 266--279.
		\newblock \href {http://dx.doi.org/10.1016/0021-9045(79)90017-0}
		{\path{doi:10.1016/0021-9045(79)90017-0}}.
		
		\bibitem{oruc99}
		Oru\c{c} H, Phillips GM. A generalization of the {B}ernstein polynomials,
		Proc. Edinb. Math. Soc. 42 (1999), pp. 403--413.
		
		\bibitem{oruc03}
		Oru\c{c} H, Phillips GM. $q$-{B}ernstein polynonials and {B}\'ezier curves, J.
		Comput. Appl. Math. 151~(1) (2003), pp. 1--12.
		
		\bibitem{simeonov12}
		P.~Simeonov, V.~Zafiris, R.~Goldman,
		\href{http://dx.doi.org/10.1016/j.jat.2011.09.006}{{$q$}-blossoming: a new
			approach to algorithms and identities for {$q$}-{B}ernstein bases and
			{$q$}-{B}\'ezier curves}, J. Approx. Theory 164~(1) (2012), pp. 77--104.
		
	\end{thebibliography}

\end{document}